\newtheorem{thm}{Theorem}
\newtheorem*{clm*}{Claim}
\newtheorem{lem}{Lemma}[section]
\newtheorem{rmk}{Remark}[section]
\newtheorem{prop}{Proposition}[section]
\newtheorem{obs}{Observation}
\newtheorem{cor}{Corollary}[section]
\newtheorem*{cor*}{Corollary}
\newcommand{\ds}{\displaystyle}
\newcommand{\R}{\mathbb R}
\newcommand{\C}{\mathbb C}
\newcommand{\E}{\mathbb E}
\newcommand{\N}{\mathbb N}
\newcommand{\Z}{\mathbb Z}
\newcommand{\Pro}{\mathbb P}
\newcommand{\lm}{\lambda}
\newcommand{\si}{\sigma}
\newcommand{\g}{\gamma}
\newcommand{\ep}{\varepsilon}
\newcommand{\M}{\mathcal{M}^{+}(T^{*})}
\newcommand{\calN}{\mathcal{N}}
\newcommand{\al}{\alpha}
\newcommand{\ind}{1{\hskip -2.5 pt}\hbox{I}}
\title[Gap probability for Gaussian Stationary Processes]
{Long gaps between sign-changes of Gaussian
Stationary Processes}
\author[N. D. Feldheim and O. N. Feldheim]{Naomi D. Feldheim$^{1,2}$ \and Ohad N. Feldheim$^{1,3}$}
\begin{document}
\subjclass[2010]{60G10,  60G15} %Stationary processes, Gaussian processes
\keywords{persistence, hole probability, Gaussian process, stationary process}

\maketitle
\begin{abstract}
We study the probability of a real-valued stationary process to be positive on a large interval $[0,N]$. We show that if in some neighborhood of the origin the spectral measure of the process has density which is bounded away from zero and infinity, then the decay of this probability is bounded between two exponential functions in $N$. This generalizes similar bounds obtained for particular cases, such as a recent result by Artezana, Buckley, Marzo, Olsen.
\end{abstract}

\footnotetext[1]{School of Mathematical Sciences, Tel Aviv University, Tel Aviv, Israel.}
\footnotetext[2]{E-mail: trinomi@gmail.com. Research supported by the Science Foundation of the Israel Academy of Sciences and Humanities, grant 166/11.}
\footnotetext[3]{E-mail: ohad\_f@netvision.net.il. Research supported by an ERC advanced grant.}

\section{Introduction}

\subsection{Definitions}
Let $T$ be either $\Z$ or $\R$, with the usual topology. A
\emph{Gaussian process} (GP) on $T$ is a random function $f:T\to \R$
whose finite marginals, that is $(f(t_1),\dots,f(t_n))$ for
      any $t_1,\dots,t_n\in T$, have multi-variate Gaussian distribution.
A GP on $\Z$ is called a \emph{Gaussian sequence}, while a GP on
$\R$ is called a \emph{Gaussian function}. In what follows, we
always assume continuity of Gaussian functions.

A GP on $T$ whose distribution is invariant with respect to shifts
by any element of $T$, is called \emph{stationary}. We abbreviate
GSP, GSS and GSF for Gaussian stationary processes, sequences and
functions respectfully.

For a GSP $f$ on $T$ define the \emph{covariance function} $r: \
T\to\R$ as
$$r(t)=\E(f(0) f(t)).$$
Observe that due to stationarity, for every $t,s\in T$ we have
$$\E \left[f(s) f(t)\right]=r(t-s). $$
It is not difficult to verify that $r(\cdot)$ is a positive-definite
continuous function (see Adler and Taylor ~\cite[Chapter
1]{AdlTay}). By Bochner's theorem, there is a finite non-negative
measure $\rho$ on $T^*$ such that
$$r(t) = \widehat {\rho}(t):=\int_{T^*} e^{-i\lm t}d\rho(\lm). $$
Here $T^*$ is the dual of $T$, i.e. $\Z^*\simeq [-\pi,\pi]$ and
$\R^*\simeq\R$. We use the notation $\M$ for the set of all finite
non-negative measures on $T^{*}$. The measure $\rho= \rho_f\in \M$
is called \emph{the spectral measure} of the process $f$. Notice
that $\rho$ must be symmetric, i.e., for any interval $I$:
$\rho(-I)=\rho(I)$. Any $\rho\in \M$ uniquely defines a GSP $f$.

Throughout the paper, we shall assume the following condition:
\begin{equation}\label{eq: cond moment}
  \exists \delta>0: \: \int_{T^*} |\lm|^\delta d\rho(\lm) <\infty.
\end{equation}
This condition is enough to ensure that the associated process $f$
will be continuous (see once again ~\cite[Chapter 1]{AdlTay}).
Notice that this holds trivially in case $T=\Z$.

\subsection{Results}
Let $f:T\to\R$ be a GSP. Define the "gap probability" of $f$ to be
$$H_f(N)=\Pro\left(\forall t\in[0,N)\cap T: \ f(t)>0\right), $$
where $N\in\R $ is a parameter. This describes the probability that
no sign-changes of $f$ occurred in a time interval of length $N$. We
study the asymptotics of this probability as $N\to \infty$. It makes
no essential difference to regard $N$ as an integer, and we usually
do so.
%In Section~\ref{sec: back} we give some background regarding the research of persistence probabilities.

Our main results are the following. Let $f$ be a Gaussian stationary
process on $T=\Z$ or $T=\R$, with spectral measure
  $\rho\in \M$, satisfying ~\eqref{eq: cond moment}.

\begin{thm}[upper bound]\label{thm: upper bd gen}
Suppose that there exists $a>0$ and two positive numbers $M,m>0$
such that
$$ \text{for any interval } I\subset (-a,a), \:\: m|I|\le \rho(I)\le M|I|.$$
Then there exists $C=C(a,m,M)>0$ such that for all large enough $N$,
$$H_f(N)\le e^{-C N}.$$
\end{thm}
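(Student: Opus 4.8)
The plan is to peel off from $f$ an independent "model" process. Write $\rho = m\,\mathbf{1}_{(-a,a)}\,d\lambda + \nu$ with $\nu := \rho - m\,\mathbf{1}_{(-a,a)}\,d\lambda$; by the lower bound in the hypothesis $\nu\ge 0$, and by the upper bound the density of $\nu$ on $(-a,a)$ is $\le M-m$. Realise accordingly $f = g+h$ with $g,h$ independent GSPs, $\rho_g = m\,\mathbf{1}_{(-a,a)}\,d\lambda$ and $\rho_h=\nu$ (the moment condition \eqref{eq: cond moment} is automatic for the compactly supported $\rho_g$, and is inherited by $\nu$ from $\rho$). The process $g$ is the "sinc process": $r_g(t) = 2m\sin(at)/t$, and — this is the property we exploit — for every $\delta>0$ the exponentials $\{e^{i\lambda\,\cdot}:\lambda\in(-a,a)\}$ form a Riesz sequence in $L^2(-a,a)$ over any set of points with consecutive gaps $\ge(1+\delta)\pi/a$; equivalently, the Gaussian vector $(g(t_1),\dots,g(t_n))$ along any such set has covariance matrix whose smallest eigenvalue is bounded below by a constant $c_1=c_1(a,m,\delta)>0$, uniformly in $n$ and in the points (indeed along any translate of the lattice $\tfrac{\pi}{a}\Z$ these values are exactly i.i.d.\ $\mathcal{N}(0,2ma)$). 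Thus $g$ injects a uniformly nondegenerate amount of fresh randomness at well-separated points, independently of $h$.

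Next, reduce $H_f(N)$ to a statement about $h$ alone. On $\{f>0\text{ on }[0,N]\}$ one has $g(t_i)>-h(t_i)$ for all $i$, for any chosen points; fix a $2\pi/a$-separated set $S=\{t_1<\dots<t_n\}\subset[0,N]$, so $n\asymp Na$. Decompose $g|_S = \sqrt{c_1}\,\zeta+\xi$ with $\zeta$ i.i.d.\ $\mathcal{N}(0,1)$, $\xi$ an auxiliary Gaussian vector, $\zeta\perp\xi$ and both independent of $h$; here $\xi$ is "short-range" ($\cov(g|_S)-c_1 I\preceq\text{const}\cdot I$, with off-diagonal entries $r_g(t_i-t_j)=O(1/|t_i-t_j|)$). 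Conditioning on $(\xi,h)$ and using the independence of $\zeta$,
\[ H_f(N)\ \le\ \E_{h,\xi}\Big[\textstyle\prod_{i=1}^n\Phi\big((h(t_i)+\xi_i)/\sqrt{c_1}\big)\Big]. \]
Since $\Phi\le\tfrac12$ on $(-\infty,0]$, it now suffices to show that for (almost) every realisation of $h$ one can choose the $2\pi/a$-separated set $S$ so that $h+\xi\le 0$ on a proportion $\gtrsim n$ of the points of $S$, with the corresponding factors multiplying to a genuine exponential.

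Producing such a set $S$ is the crux. Since $h$ is centred and stationary, $\{t\in[0,N]:h(t)\le0\}$ has expected Lebesgue measure $N/2$, which heuristically supplies the points; the two real obstacles are (i) that this sub-level set could consist of very short, densely packed intervals, obstructing the harvesting of $\gtrsim n$ well-separated points, and, more seriously, (ii) that $h$ could carry a very slowly varying component that stays positive throughout $[0,N]$ with non-negligible probability, in which case the expectation above decays only \emph{polynomially}. Obstacle (ii) is precisely where the hypothesis $\rho|_{(-a,a)}\le M\,d\lambda$ is used: it bounds the spectral density of $h$ near the origin by $M-m$, which rules out an arbitrarily slow low-frequency mode and pins the "local amplitude" of $h$ to the order of the modulus of a nondegenerate Gaussian — a random variable with only a \emph{polynomial} tail at $0$, and it is exactly this weak concentration at $0$ (as opposed to a constant, i.e.\ atomic-at-$0$, mode, which would give only $N^{-c}$) that upgrades the bound to true exponential decay. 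Concretely I would split $h=h_{\mathrm{lo}}+h_{\mathrm{hi}}$, place the points of $S$ near the local minima of $h_{\mathrm{hi}}$ along a sublattice of $\tfrac{2\pi}{a}\Z$ (where $g$ remains exactly i.i.d., so $\xi=0$), so that $h_{\mathrm{hi}}(t_i)\approx-(\text{amplitude})$, and absorb $h_{\mathrm{lo}}$ using the $M-m$ density bound, iterating the whole decomposition a bounded number of times if necessary; the final rate is then of order $a$ times a constant in $m,M$, giving $C=C(a,m,M)$. I expect the main technical obstacle to be carrying out this amplitude anti-concentration estimate, together with the harvesting of separated points, with constants depending only on $a,m,M$ and not on the (uncontrolled) total mass of $\rho$.
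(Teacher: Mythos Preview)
Your decomposition $f=g+h$ with $\rho_g=m\,\mathbf{1}_{(-a,a)}\,d\lambda$ is exactly the paper's, and you correctly note that on any translate of the lattice $\tfrac{\pi}{a}\Z$ the values of $g$ are exactly i.i.d.\ Gaussian (so the Riesz-sequence machinery and the auxiliary $\xi$ are unnecessary: work on such a translate from the start). You also correctly diagnose that the role of the upper bound $\rho\le M\,d\lambda$ near the origin is to rule out a slowly varying positive mode in $h$.

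The gap is at what you yourself call the crux. The claim that for almost every realisation of $h$ one can choose $S$ with $h\le 0$ on a positive proportion of $S$ is not established (and is false as stated: with small but positive probability $h>0$ throughout $[0,N]$), and your proposed remedy via a high/low frequency split with points placed near local minima of $h_{\mathrm{hi}}$ is only a sketch, with no visible control of the constants by $(a,m,M)$ alone. More importantly, this pointwise control is far stronger than needed, and the paper avoids it entirely with two observations. First, $\log\Phi$ is concave, so for i.i.d.\ standard Gaussians $X_j$ and \emph{any} reals $b_j$ with $\tfrac{1}{n}\sum b_j<q$ one already has $\prod_j\Phi(b_j)\le\Phi(q)^n$: a bound on the \emph{average} of the $b_j$ suffices, with no need for any of them individually to be nonpositive. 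Second, that average is controlled directly: $Z_N:=\tfrac{1}{N}\int_0^N h(t)\,dt$ is a centred Gaussian whose variance equals $\tfrac{1}{N}$ times the integral of the Fej\'er kernel against $\nu$, and this is $\le C_0/N$ precisely because $\nu$ has density $\le M-m$ on $(-a,a)$. Hence $\Pr(Z_N\ge q)\le e^{-cN}$, and on the complementary event a pigeonhole over the lattice translates produces one on which the sample mean of $h$ is also below $q$; the concavity inequality then finishes. No frequency splitting of $h$, no local minima, no amplitude anti-concentration.
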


\begin{thm}[lower bound]\label{thm: lower bd gen}
Suppose that there exists $a>0$ and a number $m>0$ such that
$$ \text{for any interval } I\subset (-a,a), \:\: m|I|\le \rho(I).$$
Then there exists $c=c(a,m)>0$ such that for all large enough $N$,
$$H_f(N)\ge e^{-cN}.$$
\end{thm}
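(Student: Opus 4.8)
The plan is to show that the event $\{f>0 \text{ on } [0,N]\}$ is not too rare by decomposing the spectral measure and using a change of measure (Cameron--Martin shift) to tilt the process upward. Write $\rho = \rho_{\mathrm{sing}} + \rho_{\mathrm{ac}}$, where we split off a piece of mass $m|I|$ on a small interval around the origin; the lower bound $m|I|\le\rho(I)$ on $(-a,a)$ guarantees we can extract an absolutely continuous component $\nu$ with density bounded below by $m$ on, say, $(-a,a)$, and set $\mu = \rho - \nu \in \M$. Correspondingly $f \stackrel{d}{=} g + h$ with $g,h$ independent GSPs having spectral measures $\nu$ and $\mu$ respectively. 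The point is that $h$ is harmless: we only need $g > \text{(something)}$ while $h$ stays bounded, and each of these has probability bounded below by $e^{-cN}$.

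First I would handle the ``flat piece'' $g$: a process whose spectral measure has density bounded below by a constant on an interval around $0$. The cleanest route is to compare with the archetypal example — a process built from a measure that is exactly (a multiple of) Lebesgue measure on $(-a,a)$, i.e. with covariance a multiple of $\sinc(at)$ — for which one can directly estimate $H(N)$ from below. One standard way: partition $[0,N]$ into $\sim N$ unit blocks, observe that the increments of $g$ over well-separated blocks are weakly correlated (covariance decaying polynomially because the spectral density is, say, in $L^1$ or because of the $\sinc$-type decay), and build a ``comparison'' event. Even more robustly, use the following trick: to make $g$ positive on $[0,N]$ it suffices to shift $g$ by a deterministic function $\varphi$ in its Cameron--Martin space with $\varphi \ge 1$ on $[0,N]$ and small RKHS norm, and then require $g + \varphi > 0$ on $[0,N]$, which contains the event $\{\sup_{[0,N]}|g| < 1\}$. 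By Li--Shao / Talagrand-type small-ball estimates for stationary Gaussian processes with spectral density in a neighborhood of $0$, $\Pro(\sup_{[0,N]}|g|<1) \ge e^{-cN}$. Then the Cameron--Martin formula converts $\Pro(g+\varphi>0)$ to $\Pro(g>0)$ up to a factor $\exp(-\tfrac12\|\varphi\|^2_{\mathrm{CM}} - \langle \text{noise}\rangle)$; choosing $\varphi$ to be (a smoothing of) a fixed bump repeated along $[0,N]$ makes $\|\varphi\|_{\mathrm{CM}}^2 = O(N)$.

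Next, the nuisance part $h$ with spectral measure $\mu$: since $\mu \in \M$ still satisfies \eqref{eq: cond moment}, $h$ is a continuous GSP, and by the same small-ball heuristic (or even more simply, since we only need an event of probability $\ge e^{-cN}$, not $\to$ anything) we have $\Pro(\sup_{[0,N]}|h| < 1/2) \ge e^{-c'N}$; here one can invoke a general lower small-deviation bound for continuous stationary Gaussian processes, which follows from a union/entropy argument over $\sim N$ points plus a modulus-of-continuity estimate coming from \eqref{eq: cond moment}. By independence of $g$ and $h$, intersecting the two events gives $\{f = g + h > 0 \text{ on } [0,N]\} \supseteq \{g > 1/2 \text{ on } [0,N]\} \cap \{|h| < 1/2 \text{ on } [0,N]\}$ (after rescaling $\varphi$ so that $g+\varphi>0$ forces $g>1/2$), hence $H_f(N) \ge e^{-CN}$.

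The main obstacle is the small-ball lower bound $\Pro(\sup_{[0,N]}|g| < \varepsilon) \ge e^{-cN}$ done cleanly enough to cover \emph{all} spectral measures with density bounded below near $0$, with constants depending only on $a$ and $m$. The subtlety is that $g$'s spectral measure is only controlled near the origin — it may be huge or wild away from $0$ — so one must first truncate: decompose $g = g_{\mathrm{low}} + g_{\mathrm{high}}$ by splitting $\nu$ into its restriction to $(-a,a)$ and the rest, argue that $g_{\mathrm{low}}$ has a bounded smooth density hence is amenable to a direct block/correlation small-ball estimate, and absorb $g_{\mathrm{high}}$ into the nuisance term $h$ above. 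I expect the bookkeeping of these three pieces (flat low-frequency part, high-frequency remainder of $\nu$, and $\mu$) and verifying that the small-ball exponents and Cameron--Martin norms are all $O(N)$ with the right constant dependence to be where the real work lies; the Cameron--Martin shift itself is routine once $\varphi$ is chosen as a periodized fixed bump adapted to the scale $1/a$.
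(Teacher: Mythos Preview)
Your plan is correct and shares the paper's architecture: decompose $\rho$ via Observation~\ref{obs: rho1+rho2} into a ``nice'' low-frequency piece plus a remainder, write $f\overset{d}{=}S\oplus g$, and lower-bound $H_f(N)$ by $\Pro(S>d\text{ on }[0,N])\cdot\Pro(|g|<d/2\text{ on }[0,N])$ using independence. The small-ball step for the remainder is identical to the paper's---both invoke the Talagrand/Shao--Wang estimate (Lemma~\ref{lem: ShW}), with the H\"older-type hypothesis on $d_g(s,t)$ verified from the moment condition~\eqref{eq: cond moment}.

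The differences are two. First, you overcomplicate the decomposition: the paper simply takes the low-frequency piece to be \emph{exactly} $m\,\ind_{[-a,a]}(\lm)\,d\lm$ (after scaling, $a=\pi$), so that $S$ is a rescaled sinc process and $\mu=\rho-m\,\ind_{[-a,a]}d\lm$ absorbs everything else, including whatever is ``wild away from $0$''. There is then no need for a third $g_{\mathrm{low}}/g_{\mathrm{high}}$ split; the bookkeeping you worry about disappears. Second, for the bound $\Pro(S>d\text{ on }[0,N])\ge e^{-c_dN}$ the paper simply cites the Antezana--Buckley--Marzo--Olsen theorem (Theorem~\ref{thm: barc}) as a black box, whereas you propose to derive it via a Cameron--Martin shift $\varphi$ combined with a small-ball bound on $\{|S|<\ep\}$. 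Your route is valid---for symmetric $A$ one has $\Pro(S-\varphi\in A)\ge e^{-\|\varphi\|_{\mathrm{CM}}^2/2}\Pro(S\in A)$ by Jensen, and a band-limited $\varphi\ge 1$ on $[0,N]$ with Paley--Wiener norm $O(\sqrt{N})$ can be built from translates of a fixed nonnegative bump such as $\sinc^2$---and it has the merit of being self-contained. The paper's route is shorter precisely because it outsources this step; your route is in effect a sketch of how one would reprove Theorem~\ref{thm: barc}.
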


\begin{rmk}\label{rmk: up extend}
{\rm The condition in Theorem~\ref{thm: upper bd gen} may be
replaced by the following: There exist two intervals $J_1=(-a,a)$
and $J_2$, and two numbers $M,m>0$, such that
\begin{enumerate}[(i)]
  \item\label{item: up bd} for any interval $I\subset J_1$: $\rho(I)\le M|I|$, and
  \item\label{item: low bd} for any interval $I\subset J_2$: $m|I|\le \rho(I)$.
\end{enumerate}
The necessary changes in the proof are indicated in Section
\ref{sec: up-extend}. However, the authors believe condition
~\eqref{item: up bd} might be enough to ensure an upper exponential
bound on $H(N)$. }
\end{rmk}

\begin{rmk}\label{rmk: fast slow}
{\rm Examples for which $H(N)$ tends to zero slower than any
exponential in $N$ are known; Newell and Rosenblatt construct one in
~\cite{NR}.

Examples for which $H(N)$ tends to zero faster than any exponential
in $N$ are also known. A simple example was pointed out to us by M.
Krishnapur. Let $(Y_j)_{j\in\Z}$ be a GS with independent entries,
and define $X_j=Y_j-Y_{j-1}$ for all $j\in \Z$. Then $X$ is a GSS
with $H_X(N) = \frac 1{N!} \simeq e^{-C N\log N}$, for a suitable
constant $C>0$. Notice that the spectral measure has density
$2(1-\cos(\lm))$, $\lm \in [-\pi,\pi]$, which vanishes at $\lm=0$.}
\end{rmk}

\subsection{Overview}
The rest of the paper is organized as follows. Section~\ref{sec:
discuss} is devoted to discussion of the results. This includes an
historical background, and a simple yet useful observation that we
shall use (Observation~\ref{obs: rho1+rho2} below). The results are
then proved independently: Theorem~\ref{thm: upper bd gen} (an upper
exponential bound) is proved in Section~\ref{sec: up}, while
Theorem~\ref{thm: lower bd gen} (a lower exponential bound) is
proved in Section~\ref{sec: low}.

\subsection{Acknowledgements}
We thank Mikhail Sodin for introducing
us to the problem and for his advice throughout the research. We are
grateful to Ron Peled for a conversation which laid the foundations
to Theorem~\ref{thm: upper bd gen}. Discussions with Jeremiah
Buckley, Amir Dembo, Manjunath Krishnapur, Zakhar Kubluchko,
Jan-Fredrik Olsen and Ofer Zeitouni improved our understanding of
the problem, its applications and its relation to other works.

\section{Discussion}\label{sec: discuss}
\subsection{Background}
Gap probability, sometimes referred to by the name "persistence
probablity" or "hole probability", was studied extensively in the
1960's, by Slepian \cite{Slep}, Longuet-Higgins~\cite{LonHig},
Newell-Rosenblatt~\cite{NR} and others. In addition to proving some
bounds and inequlities (such as the well-known "Slepian
inequality"), they developed series expansions which approximate
this probability quite well for small intervals. In a few examples,
exact expressions for the gap probability were calculated (see
~\cite{Slep} and references therein).

In the last decade or two, physicists (such as Majumdar-Bray
~\cite{phys1} and Ehrhardt-Majumdar-Bray ~\cite{phys2}) proposed
some new methods of approximation, especially for the long-range
regime. Their predictions suggest that in many cases of interest the
gap probability $H(N)$ behaves asymptotically like $e^{-\theta N}$,
with some $\theta>0$. A rigorous derivation of such a result is
still lacking.

In case the covariance function $r(t)$ is non-negative, Dembo and
Mukherjee~\cite[Theorem 1.6]{DemMu} proved those predictions are
correct; namely, that the limit
$$\lim_{N\to\infty} \frac {-\log H_f(N)}{N}$$
exists (possibly infinite).
The case when $r(t)$ changes sign, as well as computation of the
limit, remain open. We note that the work last mentioned, along with
other works by physicists such as Schehr-Majumdar~\cite{ScMa}, draw
connections between gap probabilities of GSPs, those of
diffusion processes, and those of zeros of random polynomials.

In this work we are interested in the case where $r(t)$ changes sign.
A simple and interesting example is the cardinal sine covariance $r(t)=\frac{\sin (\pi t)}{t}$,
which corresponds to indicator spectral density $\ind_{[-\pi,\pi]}$. In
an elegant recent work, Antezana, Buckley, Marzo and Olsen ~\cite{Barc} give
exponential upper and lower bounds for $H_f(N)$ (see
Theorem~\ref{thm: barc} below). Our research may be viewed as a an
extension of their result to other \emph{stationary} Gaussian
processes. Recently Antezana, Marzo and Olsen were able to
generalize this same result in the direction of Gaussian analytic
functions over \emph{de-Branges spaces} ~\cite{AMO}.

Via private communication we learned of results by
Krishnapur-Maddaly regarding lower bounds for the gap
probability of a SGS. It seems that our conditions for a lower
exponential bound are currently stronger, but they have given very
mild conditions which ensure $H_f(N)\ge e^{-cN^2}$ (where $c>0$ is a
constant, and the inequality holds for large ehough $N$). Though the
results are similar in spirit, their methods seem to be very
different from ours.

Lastly we mention an analogous result for the planar Gaussian
analytic function
$$\sum_{n\in \Z}a_n \frac{z^n}{\sqrt{n!}}, \text{  where } a_n\sim \calN_\C (0,1) \text{ are i.i.d.}$$
Bounds concerning hole probabilities for this model were obtained by
Sodin and Tsirelson~\cite{ST3}, and later refined by
Nishry~\cite{Alon}. They showed that the probability of having no
zeroes in a ball of radius $R$ in the plane is asymptotically
$e^{-(e^2/4+o(1))R^4}$, as $R\to\infty$. For discussion of such
results and comparison to other point processes in the plane, see
\cite[Chapter 7]{GAF book}.

\subsection{A Key Observation}\label{sec: obs}
We include here the basic observation which will be used to prove
both Theorems ~\ref{thm: upper bd gen} and ~\ref{thm: lower bd gen}.
We use the symbol $\oplus$ to indicate the sum of two independent
processes or random variables.

\begin{obs}\label{obs: rho1+rho2}
Let $f$ be a GSP on $T$ with spectral measure $\rho\in \M$, and
Suppose $\rho=\rho_1+\rho_2$, where $\rho_1, \rho_2 \in \M$. Then
the following equality holds in distribution:
$$f\overset{d}{=}f_1 \oplus f_2, $$
where $f_j$ is a GSP with spectral measure $\rho_j$ ($j=1,2$), and
$f_1$ is independent (as a process) from $f_2$.
\end{obs}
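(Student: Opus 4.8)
The plan is to construct the right-hand side explicitly and then identify it with $f$ via the uniqueness of a GSP given its spectral measure. First I would let $f_1$ and $f_2$ be GSPs on $T$, defined on a common probability space and independent as processes, with spectral measures $\rho_1$ and $\rho_2$ respectively; such processes exist by the correspondence between $\M$ and GSPs recalled in the introduction. Since $\rho_j\le\rho$ for $j=1,2$, each $\rho_j$ inherits the moment condition ~\eqref{eq: cond moment} from $\rho$, so in the case $T=\R$ each $f_j$ may be taken continuous. Set $g:=f_1\oplus f_2=f_1+f_2$.

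Next I would verify that $g$ is a GSP with spectral measure $\rho$. As a sum of two independent Gaussian processes, $g$ is Gaussian: for any $t_1,\dots,t_n\in T$, the vector $(g(t_1),\dots,g(t_n))$ is the sum of two independent multivariate Gaussian vectors, hence multivariate Gaussian. Stationarity of $g$ follows from stationarity of $f_1$ and $f_2$ together with their independence, since a shift acts coordinatewise and independence of the two processes is preserved under the shift. Using independence, the covariance function of $g$ is
$$\E\!\left[g(0)g(t)\right]=\E\!\left[f_1(0)f_1(t)\right]+\E\!\left[f_2(0)f_2(t)\right]=\wh{\rho_1}(t)+\wh{\rho_2}(t)=\wh{\rho}(t),$$
so by Bochner's theorem the spectral measure of $g$ is exactly $\rho=\rho_1+\rho_2$. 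Since a centered Gaussian process is determined by its covariance function, and (in the continuous setting) a GSP is in turn determined by its spectral measure — as stated in the introduction — we conclude $f\overset{d}{=}g=f_1\oplus f_2$.

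The only point that requires a little care, and the closest thing to an obstacle here, is the continuity bookkeeping when $T=\R$: one must observe that the independent versions $f_1,f_2$ can indeed be chosen continuous (this is where ~\eqref{eq: cond moment} for the $\rho_j$ is used), so that $g$ is a genuine continuous GSF and the uniqueness statement is applicable to it. When $T=\Z$ there is nothing to check on this front, and the argument above is complete as stated.
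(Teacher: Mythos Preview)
Your argument is correct and follows essentially the same route as the paper: compute the covariance of $f_1\oplus f_2$ using independence to get $\wh{\rho_1}+\wh{\rho_2}=\wh{\rho}$, and then invoke that a (centered) Gaussian process is determined in law by its covariance. You simply spell out more of the bookkeeping (existence of independent copies, Gaussianity and stationarity of the sum, and the continuity issue via the inherited moment condition) than the paper does.
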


\begin{proof}
We calculate the covariance function of $f_1\oplus f_2$ using the
independence of the processes:
\begin{align*}
\E \Big[\left(f_1(0)+f_2(0)\right)\left(f_1(t)+f_2(t)\right) \Big]
&= \E f_1(0)f_1(t) + \E f_2(0)f_2(t) \\
&=\widehat{\rho_1}(t)+\widehat{\rho_2}(t) = \widehat{\rho}(t).
\end{align*}
This covariance function is equal to that of $f$. As all processes
are Gaussian, the observation follows.
\end{proof}

\section{Upper bound: proof of Theorem~\ref{thm: upper bd gen}}\label{sec: up}
This section is devoted to the proof of Theorem~\ref{thm: upper bd
gen}.

Let $f$ be a GSF or GSS with spectral measure $\rho$, obeying the
conditions of Theorem~\ref{thm: upper bd gen}. Let $k\in\N$ be such
that $\frac \pi k \le a$, and denote $J:=[-\pi / k, \pi /k]\subset
[-a,a]$. We decompose the spectral measure as follows:
$$d\rho(\lm)=m\ind_{J}(\lm)d\lm + d\mu(\lm),$$
where $\mu\in \M$ is non-negative and there
exists $M'>0$ such that
\begin{equation}\label{eq: cond bound on mu}
\text{for any interval } I \subset (-a,a): \: \mu(I)\le M'|I|.
\end{equation}

By Observation~\ref{obs: rho1+rho2}, we may represent
$$f\overset{d}{=}S\oplus g$$
where $S$ and $g$ are independent processes, with spectral measures
$m\ind_{J}(\lm)$ and $\mu$ respectively.

%Notice that the entries of the sequence $(Y_{kj})_{j\in\Z}$ are all independent, distributed $\calN(0,m)$.
Next, we observe that sampling $S$ in a certain lattice results in
independent random variables:

\begin{obs}[indicator spectrum]\label{obs: ind spec}
The GSP $(S(t))_{t\in T}$ having spectral density
\emph{$m\ind_{[-\pi / k, \pi /k]}$} has the property that
$(S(jk))_{j\in\Z}$ are i.i.d. Gaussian random variables.
\end{obs}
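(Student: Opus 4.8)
The plan is to show that for $S$ with spectral density $m\,\ind_{[-\pi/k,\pi/k]}$, the samples $S(jk)$ form an i.i.d. Gaussian sequence, which (since joint Gaussianity is automatic) reduces to two claims: that each $S(jk)$ has the same variance, and that $\cov(S(jk),S(lk))=0$ whenever $j\ne l$. Stationarity immediately gives the first of these, so the content is the covariance computation. First I would write the covariance using Bochner's formula: for integers $j,l$,
\[
\E\big[S(jk)S(lk)\big] = r_S\big((j-l)k\big) = \int_{-\pi/k}^{\pi/k} e^{-i\lm (j-l)k}\, m\,d\lm.
\]
Then I would evaluate this integral. Setting $n=j-l$, for $n=0$ we get $r_S(0)=2\pi m/k$, and for $n\ne 0$ we get
\[
r_S(nk) = m\left[\frac{e^{-i\lm nk}}{-ink}\right]_{-\pi/k}^{\pi/k} = \frac{m}{-ink}\left(e^{-i\pi n} - e^{i\pi n}\right) = \frac{2m\sin(\pi n)}{nk} = 0,
\]
since $\sin(\pi n)=0$ for every integer $n$. (One should note $nk$ ranges over the appropriate dual group; the computation is identical whether $T=\R$, so $T^*=\R$, or $T=\Z$, so $T^*\simeq[-\pi,\pi]$ and we need $\pi/k\le\pi$, i.e. $k\ge 1$, which holds.) Hence distinct lattice samples are uncorrelated.

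Having established that the covariance matrix of any finite subcollection $(S(j_1 k),\dots,S(j_n k))$ with distinct indices is $\frac{2\pi m}{k}\,\mathrm{Id}$, I would invoke the fact that a Gaussian vector with diagonal covariance has independent coordinates; combined with the common variance this gives that $(S(jk))_{j\in\Z}$ are i.i.d. $\calN(0, 2\pi m/k)$. The only point requiring a word of care is the passage from "pairwise uncorrelated" to "jointly independent," which is valid precisely because the whole family is jointly Gaussian (finite marginals are multivariate normal by definition of a GP), so no genuine obstacle arises. The essential — and only slightly delicate — step is simply recognizing that the choice $J=[-\pi/k,\pi/k]$ makes the Fourier integral vanish exactly at the lattice $k\Z$ because the exponentials $e^{\pm i\pi n}$ coincide; this is the whole reason the mesh size $k$ is matched to the bandwidth $\pi/k$.
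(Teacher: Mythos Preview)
Your argument is correct and essentially identical to the paper's: both compute the Fourier transform of $m\ind_{[-\pi/k,\pi/k]}$ to obtain a sinc-type covariance that vanishes at nonzero multiples of $k$, and then pass from uncorrelated to independent via joint Gaussianity. You have simply written out the integral evaluation at the lattice points explicitly (and added the careful remark about joint Gaussianity), whereas the paper records the full covariance formula $\E[S(s)S(t)]=\sin(\pi(t-s)/k)/(t-s)$ first and then observes it vanishes on $k\Z$.
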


\begin{proof}
By taking the Fourier transform of the given measure, the covariance
function of $S$ is
$$\E\left[ S(s) S(t) \right]= \frac {\sin(\pi (t-s)/k)}{(t-s)}. $$
Thus $S(jk)$ and $S(mk)$ are uncorrelated for any $j,m\in\Z$, $j\neq
m$; as these are Gaussian random variables - independence follows.
\end{proof}

In order to apply Observation~\ref{obs: ind spec}, we look at a
certain translated lattice $\{jk+l: j\in \Z\}$ on which $S$ is
indeed independent. The translation (which we call "split") of the
sampled lattice will depend on $g$.

More precisely, fix a number $q>0$ (say, $q=1$), and define an event
$E$ depending only on the process $(g(t))_{t\in T}$ in the following
way:
\begin{align*}
\ds
E&=\left\{\frac 1 N \sum_{t=1}^N g(t) < q \right\}, \text{ if $g$ is GSS}\\
E&=\left\{\frac 1 N \int_{0}^N g(t) dt < q \right\}, \text{ if $g$ is GSF}
\end{align*}
Using the law of total probability we have:
\begin{align*}
  \ds \Pro &\left( f(t)=S(t)+g(t) >0, \ 0< t\le N\right) \notag \\
  &\le   \Pro\left(S(t) +g(t)>0, \ 0< t\le N \: \Big|\: E \right) +
  \Pro\left(E^c \right).
\end{align*}
It is enough to show that there exist $C_1,C_2>0$ such that for
large enough $N$,
\begin{enumerate}[(I)]
\item~\label{item: given E} $\Pro\left(S(t) +g(t)>0, \ 0< t\le N \: \Big|\: E \right)
\le e^{-C_1 N}$, and
\item~\label{item: E^c} $ \Pro\left(E^c \right)\le e^{-C_2 N}$.
\end{enumerate}
We proceed the proof for the function-case, noting the sequence-case
follows similar lines and is generally easier.

We begin by showing ~\eqref{item: given E}.
It is enough to show
that there is $C_1>0$ such that for any large enough $N$ and any
fixed $g\in E$,
\begin{equation*}
\Pro\left(S(t) +g(t)>0, \ 0< t\le N \right) \le e^{-C_1 N}.
\end{equation*}
Indeed, this would imply (using the independence of $g$ and $S$):
\begin{align*}
&\Pro\left(S(t) +g(t)>0, \ 0< t\le N |\  E \right)\\
&=\E\left(  \Pro\left(S(t) +g(t)>0, \ 0< t\le N \right) \ \big| \ E  \right) \le e^{-C_1 N},
\end{align*}
as required.

To that end, we use a
property which holds when the event $E$ occurs, stated below.
\begin{obs}\label{obs: split}
Let $g$ be a continuous function such that
$\frac 1 N \int_0^N g(t) dt <q$,
and assume $N\in \N$ is divisible by $k$,  then there exists a number $l\in[0,k)$ such that
$$\frac k N \sum_{j=0}^{N/k-1} g(jk+l)<q. $$
\end{obs}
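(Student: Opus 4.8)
The plan is to prove this by a straightforward averaging (pigeonhole) argument over the shift parameter. Introduce, for $l\in[0,k)$, the quantity
$$\Phi(l):=\frac k N \sum_{j=0}^{N/k-1} g(jk+l),$$
which is exactly the average of the $N/k$ sampled values $g(l),g(k+l),\dots,g(N-k+l)$. First I would note that $\Phi$ is continuous on $[0,k)$, being a finite sum of translates of the continuous function $g$, hence integrable there; so it suffices to show that the mean value of $\Phi$ over $[0,k)$ is strictly less than $q$.

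Averaging over $l\in[0,k)$ and interchanging the finite sum with the integral gives
$$\frac 1 k \int_0^k \Phi(l)\,dl = \frac 1 N \sum_{j=0}^{N/k-1}\int_0^k g(jk+l)\,dl = \frac 1 N \sum_{j=0}^{N/k-1}\int_{jk}^{(j+1)k} g(s)\,ds = \frac 1 N \int_0^N g(s)\,ds,$$
where the last equality uses that the intervals $[jk,(j+1)k)$, $j=0,\dots,N/k-1$, tile $[0,N)$; this is precisely the point at which the hypothesis $k\mid N$ is needed. By assumption the right-hand side is $<q$.

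Finally, since the mean value of $\Phi$ over $[0,k)$ is strictly less than $q$, the set $\{l\in[0,k):\Phi(l)<q\}$ must be nonempty — otherwise $\Phi(l)\ge q$ for every $l$ would force the average to be at least $q$, a contradiction. Choosing any $l$ in this set yields the desired inequality. There is no genuine obstacle here; the only steps meriting a word of care are the interchange of the finite sum with the integral and the use of divisibility to guarantee that the shifted sampling blocks exactly cover $[0,N)$.
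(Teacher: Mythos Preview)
Your argument is correct and is essentially the same as the paper's: both compute the average of the shifted lattice sums over $l\in[0,k)$ and observe it equals $\tfrac{1}{N}\int_0^N g(t)\,dt<q$, forcing some $l$ to satisfy the inequality. The paper phrases this as a one-line proof by contradiction (if all $l$ failed, integrating over $l$ would contradict the hypothesis), whereas you spell out the direct averaging computation, but the content is identical.
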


\begin{proof}
  Else, for every $l\in[0,k)$ the reverse inequality holds.
  Integrating it over $l\in[0,k]$ yields a contradiction.
\end{proof}

Now, fix a
function $g\in E$.
We can find a special split $l_g$ whose existence
is guaranteed by Observation~\ref{obs: split}. Therefore:
\begin{align*}
&\Pro\left(S(t) +g(t)>0, \ 0< t\le N \right) \\
&\le\Pro\left(S(jk+l_g)+g(jk+l_g)>0, \ j=0,1,\dots, N/k -1\right),
\end{align*}
where $(S(jk+l))_{j\in\Z}$ are i.i.d Gaussians (whose variance is
independent of $l_g$), and $\frac k N
\sum_{j=0}^{N/k-1} g(jk+l_g)<q$.
The following inequality will give the desired
bound.

\begin{prop}\label{prop: Xj+aj>0}
  Let $X_1,\dots,X_N$ be i.i.d real centered Gaussian random variables, and let $q\in \R$.
  There is a constant $C_q>0$ such that
  for any numbers $b_1,\dots,b_N\in \R$ which obey $\frac 1 N\sum_{j=1}^N b_j<q$, the following holds:
  \[
  \Pro\left(X_j+b_j>0,\: 1\le j\le N\right)\le e^{-C_q N}.
  \]
\end{prop}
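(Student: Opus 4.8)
The plan is to reduce the statement to a one-variable convexity inequality. Let $\sigma^2>0$ be the common variance of the $X_j$ (in the application $\sigma^2=m|J|$); nondegeneracy is essential here, since if $\sigma=0$ and $q>0$ one could take all $b_j=q/2>0$ and the probability would equal $1$. By independence the event factors into one-dimensional tails:
$$\Pro\left(X_j+b_j>0,\ 1\le j\le N\right)=\prod_{j=1}^N\Pro(X_j>-b_j)=\prod_{j=1}^N\Phi\!\left(\frac{b_j}{\sigma}\right),$$
where $\Phi$ denotes the standard Gaussian distribution function. Taking logarithms, it is enough to prove
$$\frac1N\sum_{j=1}^N\log\Phi\!\left(\frac{b_j}{\sigma}\right)\le\log\Phi\!\left(\frac{q}{\sigma}\right),$$
since the right-hand side is a negative constant (any finite $q$ gives $\Phi(q/\sigma)<1$), so we may then take $C_q:=-\log\Phi(q/\sigma)>0$.

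The key input is log-concavity of the Gaussian distribution function: $\psi:=\log\Phi$ is concave and strictly increasing on $\R$. Monotonicity is clear. For concavity, $\psi'=\phi/\Phi$, and with $h(x):=x\Phi(x)+\phi(x)$ a direct computation gives $(\phi/\Phi)'=-h\,\phi/\Phi^2$; since $h>0$ on $\R$ (immediate for $x\ge0$, and for $x<0$ it is the classical Mills-ratio bound $\Phi(x)<\phi(x)/|x|$), the derivative $\psi'$ is decreasing. (Equivalently: $\Phi$, being the integral of the log-concave Gaussian density, is log-concave.)

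Granting this, the displayed inequality follows from Jensen's inequality applied to the concave $\psi$ with the uniform average: writing $\bar b:=\frac1N\sum_{j=1}^N b_j<q$,
$$\frac1N\sum_{j=1}^N\psi\!\left(\frac{b_j}{\sigma}\right)\le\psi\!\left(\frac{\bar b}{\sigma}\right)\le\psi\!\left(\frac{q}{\sigma}\right),$$
where the last inequality uses that $\psi$ is increasing and $\sigma>0$. This is exactly what was needed, with $C_q=-\log\Phi(q/\sigma)$.

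There is no real obstacle; the point worth emphasizing is why a bound on the \emph{average} of the $b_j$ suffices even though an individual $b_j$ may be arbitrarily large and positive. Log-concavity of $\Phi$ says that, for a fixed value of the mean $\bar b$, the product $\prod_j\Phi(b_j/\sigma)$ is maximized when all the $b_j$ coincide — a very negative $b_j$ depresses its own factor by more than it gains from the slack it leaves the others — and in that extremal case the product is exactly $\Phi(\bar b/\sigma)^N\le\Phi(q/\sigma)^N$.
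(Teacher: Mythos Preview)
Your proof is correct and follows essentially the same route as the paper: factor the probability by independence, take logarithms, and apply Jensen's inequality via the concavity and monotonicity of $\log\Phi$ to obtain $C_q=-\log\Phi(q/\sigma)$. The only cosmetic difference is that the paper normalizes to $\sigma=1$ at the outset, whereas you carry $\sigma$ through explicitly (and correctly note that nondegeneracy is needed).
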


\begin{proof}
Without loss of generality assume $\text{var}(X_1)=1$. Denote by
$\Phi(b) =\Pro(X_1<b)$ the cumulative distribution function of
$X_1$. By symmetry, $\Phi(b)=\Pro(X_1>-b)$. Using the "i.i.d"
property of the variables $\{X_j\}_{j=1}^N$ we have:
\[
p=\Pro\left(X_j+b_j>0,\: 1\le j\le N\right)=\prod_{j=1}^N
\Pro\left(X_j>-b_j\right)=\prod_{j=1}^N \Phi(b_j).
\]
Taking logarithm and using the concavity and monotonicity of
$x\mapsto \log \Phi(x)$, we get:
\[
\log p = \sum_{j=1}^N \log \Phi(b_j)\le N \cdot\log
\Phi\left(\frac{\sum_j^N b_j}{N}\right)< N\cdot \log \Phi(q),
\]
and so $C_q=-\log \Phi(q)>0$ is the desired constant.
\begin{comment}
For completeness, we include the proof of concavity of $\log
\Phi(x)$. We compute two derivatives.
\begin{align*}
  \log \Phi(x) &= \log\left(\int_{-\infty}^x e^{-t^2/2}
  dt\right)-\log \sqrt{2\pi} \\
  (\log \Phi)'(x)&=\frac {e^{-x^2/2}}{\int_{-\infty}^x e^{-t^2/2}
  dt} = \left[e^{x^2/2}\cdot\int_{-\infty}^x e^{-t^2/2}dt
  \right]^{-1}\\
  (\log \Phi)^{\prime \prime}(x)&=-\left[e^{x^2/2}\cdot\int_{-\infty}^x e^{-t^2/2}dt
  \right]^{-2}\cdot \left( 1+xe^{x^2/2}\int_{-\infty}^x e^{-t^2/2}dt
  \right)
\end{align*}

In order to show concavity we need $\forall x\in\R: \:(\log
\Phi)^{\prime \prime}(x)<0$. For $x\ge 0$ this is clear from our
calculation. For $x<0$ write $y=-x>0$; we need to check that
\begin{equation}\label{eq: Gauss bound}
\forall y>0: \:\:\int_y^\infty e^{-t^2/2}dt <\frac 1 y e^{-y^2/2}.
\end{equation}
Indeed: $y\int_y^\infty e^{-t^2/2}dt\le\int_y^\infty t e^{-t^2/2}dt
  =e^{-y^2/2}.$
\end{comment}
\end{proof}

In order to prove ~\eqref{item: E^c}, we shall use the following:
\begin{prop}\label{prop: II}
$\frac 1 N \int_0^N g(t) dt\sim \calN_\R(0,\si_N^2)$, where
$\si_N^2\le \frac {C_0}{N}$ for all $N\in\N$ and some constant
$C_0>0$.
\end{prop}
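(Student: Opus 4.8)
The plan is to observe first that the only content of the statement is the variance estimate: since $g$ is a continuous centered Gaussian function, the average $Y_N:=\frac1N\int_0^N g(t)\,dt$ is an $L^2$-limit of Gaussian Riemann sums (this uses the continuity of $g$ and the crude bound $\E Y_N^2\le r_g(0)<\infty$), hence is itself a centered Gaussian variable. Thus $Y_N\sim\calN_\R(0,\si_N^2)$, and it remains to prove $\si_N^2=\var(Y_N)\le C_0/N$.

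To this end I would pass to the spectral side. Writing $r_g=\wh\mu$ and applying Fubini (valid because $\mu$ is finite and $[0,N]^2$ carries finite Lebesgue measure),
\[
\si_N^2=\frac1{N^2}\int_0^N\!\!\int_0^N r_g(s-t)\,ds\,dt
=\frac1{N^2}\int_{\R}\Bigl|\int_0^N e^{-i\lm t}\,dt\Bigr|^2 d\mu(\lm)
=\int_{\R}\sinc^2\!\Bigl(\frac{\lm N}{2}\Bigr)\,d\mu(\lm),
\]
where the last equality uses $\bigl|\int_0^N e^{-i\lm t}\,dt\bigr|^2=4\sin^2(\lm N/2)/\lm^2$ and the convention $\sinc(x)=\sin x/x$.

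The remaining step — and the place where the hypothesis ~\eqref{eq: cond bound on mu} enters — is the bound $\int_{\R}\sinc^2(\lm N/2)\,d\mu(\lm)\le C_0/N$. From $\sinc^2(x)\le\min(1,x^{-2})$ I would bound the integrand by $h_N(\lm):=\min\bigl(1,4/(\lm^2N^2)\bigr)$ and split the domain at $|\lm|=a$. On $\{|\lm|>a\}$ one has $h_N\le 4/(a^2N^2)$, so this region contributes at most $4\mu(\R)/(a^2N^2)$, which is $O(N^{-2})$ and uses only the finiteness of $\mu\in\M$. On $\{|\lm|\le a\}$ I would apply the layer-cake formula $h_N(\lm)=\int_0^1\ind\{h_N(\lm)>s\}\,ds$ together with the identity $\{h_N>s\}=\{|\lm|<2/(\sqrt s\,N)\}$, valid for $s\in(0,1)$; exchanging the order of integration reduces this region to $\int_0^1\mu\bigl([-a,a]\cap[-\tfrac2{\sqrt s\,N},\tfrac2{\sqrt s\,N}]\bigr)\,ds$, whose integrand is at most $\min\bigl(\mu(\R),\,4M'/(\sqrt s\,N)\bigr)$ by ~\eqref{eq: cond bound on mu}. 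Since $s^{-1/2}$ is integrable at the origin, this integral is $O(1/N)$, and combining the two regions gives $\si_N^2\le C_0/N$ with $C_0$ depending only on $a$, $M'$ and $\mu(\R)$.

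I do not expect a genuine obstacle. The only subtlety is that the linear-growth bound on $\mu$ is available merely on the neighbourhood $(-a,a)$ of the origin, which is exactly why the portion of the spectrum outside $(-a,a)$ must be handled separately — but there the estimate costs only $O(N^{-2})$ by mere finiteness of $\mu$, so it is harmless. For completeness, the sequence case is identical once $\int_0^N$ is replaced by $\sum_{t=1}^N$ (so that $\bigl|\int_0^N e^{-i\lm t}\,dt\bigr|^2$ is replaced by $\sin^2(N\lm/2)/\sin^2(\lm/2)$) and one uses $\sin^2(\lm/2)\ge c\,\lm^2$ on $[-\pi,\pi]$ before repeating the same estimate.
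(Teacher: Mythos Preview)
Your proposal is correct and follows essentially the same route as the paper: pass to the spectral side to express $\si_N^2=\int_\R \sinc^2(\lm N/2)\,d\mu(\lm)$ (the paper reaches the same Fej\'er-type integral via Plancherel on the triangular weight $(1-|t|/N)\ind_{[-N,N]}$), and then bound the integral by splitting near and far from the origin, using~\eqref{eq: cond bound on mu} near~$0$ and finiteness of $\mu$ away from it. The only cosmetic difference is that the paper does a direct three-region split $\{|\lm|<\pi/N\}\cup\{\pi/N\le|\lm|<a\}\cup\{|\lm|\ge a\}$, whereas you handle the inner region $\{|\lm|\le a\}$ via the layer-cake formula; both yield the same $O(1/N)$ bound.
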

\begin{proof}
The normality of the given integral follows from general arguments
of convergence of Gaussian random variables.
% convergence of Gaussians in probability always has a Gaussian limit.
% the integral is limit of Riemann Sums.
We focus on the bound on its variance. Recall that $\mu$ denoted the
spectral measure of $g$. We calculate the variance:
\begin{align*}
  \si_N^2&=\frac 1 {N^2} \E\left(\int_0^N g(t) dt\right)^2
  = \frac 1 {N^2}\iint_{[0,N]^2} \E (g(t) g(s)) dt\ ds\\
  &=\frac 1 {N^2} \int_0^N \int_0^N \widehat{\mu}(t-s) dt\ ds
  =\frac 1 {N} \int_{|t|<N}\left(1-\frac {|t|}{N}\right) \widehat{\mu}(t)dt.
\end{align*}
The change in order of integration and expectancy in the first
equality is easily justified by use of Fubini's theorem.

The inverse Fourier transform of $(1-\frac {|t|}{N})\ind_{[-N,N]}(t)$ is given by %CHECK
$$K_N(\lm)=N\left(\frac {\sin(N\lm/2)}{N\lm/2}\right)^2 \le \min\left(N,\frac {\pi^2}{N\lm^2}\right)$$
Using first Plancherel's identity, and then condition~\eqref{eq: cond bound
on mu} on the boundness of $\mu$, we get:
\begin{align*}
\si_N^2 &= \frac 1 N \int_\R K_N(\lm) d\mu(\lm) \\
&\le \int_{|\lm|< \frac \pi N} d\mu(\lm) + \frac {\pi^2}{N^2}\left(\int_{\frac{\pi}{N}\le |\lm|<a} +\int_{|\lm|\ge a}\right)\frac 1 {\lm^2}d\mu(\lm)\\
& \le M'\cdot \frac {2\pi}{N} + \frac {\pi^2}{N^2}\left( M' \int_{\frac{\pi}{N}\le |\lm|<a} \frac {d\lm} {\lm^2} + \frac 1 {a^2} \mu(\left\{|\lm|>a\right\}) \right) \\
& \le \frac {C_0} N,
\end{align*}
where $C_0$ is a constant (depending on $\mu$).
\end{proof}

At last, we prove ~\eqref{item: E^c}. Denote by $\g$ a standard
Gaussian random variable (i.e., distributed $\calN(0,1)$). Using the
Proposition~\ref{prop: II} together with the well-known inequality
\begin{equation*}
%\forall y>0: \:\:\int_y^\infty e^{-t^2/2}dt <\frac 1 y e^{-y^2/2},
\forall y>0: \:\:\Pro\left(\g >y\right) <\frac 1 {\sqrt{2\pi}y}
e^{-y^2/2},
\end{equation*}
we get:
\begin{align*}
  \Pro(E^c)=\Pro\left( \frac 1 N \int_{0}^N g(t) \ge q \right)& =
  \Pro(\si_N \cdot \g \ge q)=\Pro\left(\g \ge \frac q {\si_N}\right) \\
  &\le \frac 1 {\sqrt{2\pi} }\cdot \frac{\si_N}{q} e^{-\frac 1 2 \cdot
  \frac{q^2}{\si_N^2}}\\
  & \le \frac 1 q \sqrt{\frac { C_0}{2\pi N}}  e^{-\frac{q ^2}{2 C_0} N} \le e^{-C_2 N},
\end{align*}
for a suitable choice of $C_2>0$ (depending only on $q$ and $\mu$).
Theorem~\ref{thm: upper bd gen} is proved.
\qed

\subsection{Extension: Proof of Remark~\ref{rmk: up extend}}\label{sec: up-extend}
Remark~\ref{rmk: up extend} states a somewhat more general condition
under which the conclusion of Theorem~\ref{thm: upper bd gen} is
true. The proof is only a slight modification of the one presented.
First, choose $l,k\in \N$ so that
$$J:=\left[\frac {(2l-1)\pi}{k},\frac {2l\pi}{k}\right] \subset J_2\cup(-J_2) .$$
Now decompose the measure as follows:
$$d\rho(\lm)=m\ind_{J\cup -J}(\lm)d\lm + d\mu(\lm).$$
By the premise, $\mu\in \M$ obeys the boundedness condition~\eqref{eq:
cond bound on mu} (just as before). Applying Observation ~\ref{obs:
rho1+rho2} we get
$$f\overset{d}{=} S \oplus g,$$
where $S$ has spectral measure $m\ind_{J\cup -J}(\lm)d\lm$ and $g$
has spectral measure $\mu$. We define $E$ and strive to prove items
~\eqref{item: given E} and ~\eqref{item: E^c}. Item~\eqref{item:
E^c} follows from Proposition~\ref{prop: II} and the calculation
following it with no change. The only property used in order to
prove item~\eqref{item: given E} is the independence of
$(S(jk))_{j\in\Z}$ (i.e., Observation~\ref{obs: ind spec}). Let us
show this still holds.

One way to end the argument is by calculation of the Fourier
transform of $\ind_{J\cup -J}(\lm)d\lm$ and observing it vanishes at
$kj$, $j\in\Z$ (just as in the proof of Observation~\ref{obs: ind
spec}). We give here a more general argument, relying on two
observations:
\begin{obs}\label{obs: scale}
  Let $(f(t))_{t\in \R}$ be a GSF with spectral measure $\rho$,
  and $\al>0$. Then the GSF $x\mapsto f(\al x)$  has spectral measure $\rho_\al$, defined by
  $$\forall I\subset \R:\ \: \rho_\al(I)=\rho(\{x\in \R:\ \al x \in I\}) $$
\end{obs}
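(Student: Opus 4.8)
The plan is to identify the covariance function of the dilated process and recognize it as the Fourier transform of the claimed measure, then invoke the uniqueness of the spectral measure in Bochner's theorem.

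First I would set $g(x):=f(\alpha x)$ and check that $g$ is again a GSF: its finite marginals are linear images of those of $f$, hence Gaussian, and for $s,t\in\R$ one has $\E[g(s)g(t)]=\E[f(\alpha s)f(\alpha t)]=r(\alpha(t-s))$, which depends only on $t-s$, so $g$ is stationary with covariance function $r_g(t)=r(\alpha t)=\widehat{\rho}(\alpha t)$.

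Next I would carry out the change of variables in the spectral integral. Writing $r_g(t)=\int_\R e^{-i\lambda\alpha t}\,d\rho(\lambda)$ and substituting $\eta=\alpha\lambda$, equivalently pushing $\rho$ forward under the map $\lambda\mapsto\alpha\lambda$, one gets $r_g(t)=\int_\R e^{-i\eta t}\,d\rho_\alpha(\eta)$, where $\rho_\alpha(I)=\rho(\{x\in\R:\alpha x\in I\})$ is exactly the measure in the statement. Since $\alpha>0$, the measure $\rho_\alpha$ is finite, non-negative and symmetric, so $\rho_\alpha\in\M$, and it inherits condition~\eqref{eq: cond moment}, as $\int_\R|\eta|^\delta\,d\rho_\alpha(\eta)=\alpha^\delta\int_\R|\lambda|^\delta\,d\rho(\lambda)<\infty$. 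Thus $\widehat{\rho_\alpha}=r_g$ is the covariance function of $g$, and by the uniqueness part of Bochner's theorem $\rho_\alpha$ is the spectral measure of $g$, which is the assertion.

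There is no genuine obstacle here; the only point deserving care is bookkeeping the direction of the dilation — scaling the time variable by $\alpha$ corresponds to scaling the frequency variable by $1/\alpha$, i.e. to pushing $\rho$ forward by multiplication by $\alpha$ — and noting that the change of variables is legitimate because we are merely substituting in a convergent integral against a finite measure.
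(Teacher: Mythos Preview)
Your argument is correct and follows exactly the approach of the paper, which compresses the whole computation into the single line $\E[f(\alpha t)f(\alpha s)]=\widehat{\rho}(\alpha(t-s))=\widehat{\rho_\alpha}(t-s)$. You have simply unpacked this identity by explicitly performing the change of variables $\eta=\alpha\lambda$ and invoking the uniqueness in Bochner's theorem, which is implicit in the paper's one-line proof.
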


\begin{proof}
  $\E \left[f(\al t)f(\al s)\right] = \widehat{\rho}(\al(t-s)) = \widehat{\rho_\al}(t-s)$.
\end{proof}

\begin{obs}\label{obs: sample}
  If $(f(t))_{t\in \R}$ is a GSF with spectral measure $\rho$,
  then sampling the lattice $(f(j))_{j\in\Z}$
  has the folded spectral measure $\rho^*\in\mathcal{M^{+}}([-\pi,\pi])$ obtained by:
  $\rho^*(I) = \sum_{m\in \Z }\rho(I + 2\pi m)$.
\end{obs}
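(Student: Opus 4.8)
The plan is to reduce the statement, exactly as in the proof of Observation~\ref{obs: rho1+rho2}, to an identity between covariance functions: the sampled sequence $(f(j))_{j\in\Z}$ is Gaussian, and so is the GSS associated with the measure $\rho^*$, so it suffices to show the two have the same covariance function on $\Z$, i.e. that $\widehat{\rho^*}(n)=r(n)$ for every $n\in\Z$, where $r=\widehat{\rho}$.

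First I would check that $\rho^*$ is a legitimate spectral measure. Working with the half-open fundamental domain $(-\pi,\pi]$ (so that possible atoms of $\rho$ at odd multiples of $\pi$ are counted exactly once), non-negativity of $\rho^*(I)=\sum_{m\in\Z}\rho(I+2\pi m)$ is immediate, countable additivity follows from that of $\rho$ together with Tonelli's theorem applied to the double sum, and the total mass is $\rho^*((-\pi,\pi])=\sum_{m\in\Z}\rho(((2m-1)\pi,(2m+1)\pi])=\rho(\R)<\infty$ since $\rho\in\M$ is finite. Symmetry of $\rho^*$, needed for it to be the spectral measure of a real GSS, follows from symmetry of $\rho$ after reindexing $m\mapsto-m$.

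Next I would compute, for $n\in\Z$,
\begin{align*}
\widehat{\rho^*}(n)=\int_{(-\pi,\pi]}e^{-i\lm n}\,d\rho^*(\lm)
&=\sum_{m\in\Z}\int_{((2m-1)\pi,(2m+1)\pi]}e^{-i(\mu-2\pi m)n}\,d\rho(\mu)\\
&=\sum_{m\in\Z}\int_{((2m-1)\pi,(2m+1)\pi]}e^{-i\mu n}\,d\rho(\mu)
=\int_{\R}e^{-i\mu n}\,d\rho(\mu)=\widehat{\rho}(n)=r(n).
\end{align*}
The second equality unfolds $\rho^*$ over $(-\pi,\pi]$ into the sum of the translated slices of $\rho$, via the change of variables $\mu=\lm+2\pi m$ in the $m$-th summand; the third equality uses $e^{2\pi i mn}=1$ for $m,n\in\Z$; the fourth uses that the intervals $((2m-1)\pi,(2m+1)\pi]$ partition $\R$. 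The interchange of the sum over $m$ with the integral is justified by $|e^{-i\lm n}|=1$ and the finiteness of $\rho$. Since $\E[f(0)f(n)]=r(n)$ by definition of the covariance function, this shows that the sampled sequence and the GSS with spectral measure $\rho^*$ have equal covariance functions, hence equal distributions, as claimed.

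There is no serious obstacle here: the content is the Poisson-summation (aliasing) identity, and the only point requiring a modicum of care is the measure-theoretic bookkeeping — legitimizing the exchange of the infinite sum with the integral and fixing the fundamental domain so that boundary atoms of $\rho$ are not double-counted — after which the reduction to the covariance function is the same Gaussian argument already used for Observation~\ref{obs: rho1+rho2}.
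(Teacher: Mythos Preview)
Your proposal is correct and follows the same approach as the paper's one-line proof, which simply asserts that $\rho^*$ is the unique measure in $\mathcal{M}^+([-\pi,\pi])$ satisfying $\widehat{\rho^*}(j)=\widehat{\rho}(j)$ for all $j\in\Z$. You have spelled out exactly this Fourier-coefficient identity, together with the measure-theoretic housekeeping (finiteness, symmetry, choice of fundamental domain) that the paper leaves implicit.
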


\begin{proof}
  $\rho^{*}$ is the unique measure in $\mathcal M^{+}([-\pi,\pi])$ such that $\widehat{\rho^{*}}(j)=\widehat{\rho}(j)$ for any $j\in\Z$.
\end{proof}

Combining the last two observations, we get that if $(S(t))_{t\in
T}$ has spectral density $m\ind_{J\cup -J}(\cdot)$, then the
spectral density of $(S(kj))_{j\in\Z}$ is
$m\ind_{[-\pi,\pi]}(\cdot)$. Now Observation~\ref{obs: ind spec}
leads to the desired conclusion.

\section{Lower bound: Proof of Theorem~\ref{thm: lower bd gen}}\label{sec: low}
%In this section we prove Theorem~\ref{thm: lower bd gen}.
\subsection{Reducing GSS to GSF}
Theorem~\ref{thm: lower bd gen} is easily reduced to the case of
functions, by noticing the following:
\begin{obs}\label{obs: Hf<Hx}
  Any finite measure $\rho\in \mathcal{M}^{+}([-\pi,\pi])$ generates a GSF
  $f$ and a GSS $X$. The distribution of $(X(j))_{j\in\Z}$ is the
  same as that of $(f(j))_{j\in\Z}$ (since their covariance functions coincide).
  Moreover, for any number $N$:
\begin{align*}
H_f(N)&=\Pro(f(x)>0, \: x\in [0,N)\cap \R) \\
&\le \Pro(f(j)>0, \: j\in
  [0,N)\cap \N)=H_X(N).
  \end{align*}
\end{obs}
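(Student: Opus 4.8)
The plan is to verify the three assertions of the observation in turn, each following from an elementary property of spectral measures. First I would note that a finite measure $\rho$ supported on $[-\pi,\pi]$ is simultaneously an admissible spectral measure for a GSF and for a GSS: in the sequence case this is immediate since $\Z^{*}\simeq[-\pi,\pi]$, while in the function case one only has to check condition~\eqref{eq: cond moment}, which holds trivially because $\rho$ has compact support (indeed $\int_{\R}|\lm|^{\delta}\,d\rho(\lm)\le\pi^{\delta}\rho(\R)<\infty$ for every $\delta>0$), so the resulting GSF $f$ is continuous. Let $X$ denote the GSS with the same spectral measure $\rho$.

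Second, I would compare covariance functions. By definition $r_{f}(t)=\wh\rho(t)=\int_{[-\pi,\pi]}e^{-i\lm t}\,d\rho(\lm)$ for $t\in\R$, whereas $r_{X}(n)=\wh\rho(n)$ for $n\in\Z$, so $r_{f}$ restricted to $\Z$ equals $r_{X}$. Hence the centered Gaussian vector $(f(j))_{j\in\Z}$, whose entries satisfy $\E[f(i)f(j)]=r_{f}(i-j)=r_{X}(i-j)=\E[X(i)X(j)]$, has the same finite-dimensional distributions as $(X(j))_{j\in\Z}$; since Gaussian processes are determined by their covariance, the two sampled processes agree in law.

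Third, for the inequality I would use the trivial set inclusion
$$\{\,f(x)>0\ \text{for all}\ x\in[0,N)\cap\R\,\}\ \subseteq\ \{\,f(j)>0\ \text{for all}\ j\in[0,N)\cap\N\,\},$$
which holds because the right-hand event constrains $f$ at a subset of the points constrained on the left. Taking probabilities yields $H_{f}(N)\le\Pro(f(j)>0,\ j\in[0,N)\cap\N)$, and the previous paragraph lets us replace $(f(j))_{j\in\Z}$ by $(X(j))_{j\in\Z}$, turning the right-hand side into $H_{X}(N)$. There is essentially no obstacle here; the only point deserving a word of care is the legitimacy of regarding a single measure as the spectral measure of two different processes, together with the attendant continuity of $f$, which is why I would settle condition~\eqref{eq: cond moment} explicitly at the outset.
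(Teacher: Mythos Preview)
Your proposal is correct and follows exactly the reasoning the paper intends: the observation is stated in the paper without a separate proof, relying on the parenthetical remark that the covariance functions coincide and on the obvious set inclusion for the inequality. Your write-up simply makes these two steps explicit (and adds the harmless check of condition~\eqref{eq: cond moment} via compact support), so there is nothing to add or correct.
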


Therefore, in order to bound $H_X(N)$ from below where $X$ is a GSS,
it is enough to bound $H_f(N)$ from below where $f$ is the GSF with
the same spectral measure as $X$.

\subsection{Proof for GSF}
Let $(f(t))_{t\in\R}$ be a GSF with spectral measure $\rho$, obeying
the condition of Theorem~\ref{thm: lower bd gen}. By scaling $f$
(and therefore scaling its spectral measure according to
Observation~\ref{obs: scale}, we may assume the condition is
satisfied with $a=\pi$.

Just as in the proof of Theorem~\ref{thm: upper bd gen}, we
decompose the spectral measure in the following manner:
$$d\rho=m\ind_{[-\pi,\pi]}(\lm)d\lm + d\mu.$$
Applying Observation~\ref{obs: rho1+rho2} we have
$$f\overset{d}{=}S\oplus g$$
where $S$ and $g$ are independent processes, and the spectral
measure of $S$ has density $m\ind_{[-\pi,\pi]}(\lm)$.

We have:
\begin{align}\label{eq: main low}
  H_f(N)&=\Pro\left(S(x)+g(x)>0,\:\: 0\le x< N\right) \notag \\
  & \ge \Pro\left(S(x)>d,\; 0\le x< N\right)\Pro\left(|g(x)|\le \frac d 2, \;0\le x< N\right),
\end{align}
where $d>0$ is a parameter of our choice. The first probability is
bounded from below by the following theorem:

\begin{thm} [Antezana, Buckley, Marzo, Olsen ~\cite{Barc}] \label{thm: barc}
Let $S(x)$ be the GSF with spectral measure {\rm
$d\rho(\lm)={\ind}_{[-\pi,\pi]}(\lm)d\lm$}. Then for any $d>0$ there
exists a constant $c_d>0$, such that for all $N\in\N$,
\[
\Pro\left(S(x)>d, \:\:0\le x< N\right) \ge e^{-c_d N}.
\]
\end{thm}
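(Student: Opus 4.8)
The plan is to establish the theorem of Antezana, Buckley, Marzo and Olsen for the sinc-kernel process by a self-contained argument that exploits the special structure of the spectral density $\ind_{[-\pi,\pi]}$. First I would record the sampling identity already implicit in Observation~\ref{obs: ind spec}: the random variables $(S(j))_{j\in\Z}$ are i.i.d.\ standard Gaussians (up to the fixed variance normalization), so conditioning on their values leaves a process whose fluctuations I can control. Concretely, by the Shannon--Whittaker interpolation formula the process $S$ is almost surely recovered from its integer samples via $S(x)=\sum_{j\in\Z}S(j)\,\sinc(x-j)$, where $\sinc(u)=\sin(\pi u)/(\pi u)$; this representation converts the event $\{S(x)>d,\ 0\le x<N\}$ into a statement about a random trigonometric-type interpolant of an i.i.d.\ Gaussian sequence.

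The key step is to choose a favorable event for the samples that \emph{forces} positivity of the interpolant on the whole interval. I would look for a deterministic configuration $(a_j)_{j\in\Z}$ of sample values — roughly, $a_j$ large and positive for $0\le j\le N$ and decaying (or simply bounded) outside — such that any $S$ with $S(j)$ close to $a_j$ satisfies $S(x)>d$ for all $x\in[0,N]$. The mechanism is that near each integer $j\in[0,N]$ the term $S(j)\sinc(x-j)$ dominates, while the tails $\sum_{|j|\text{ far}}S(j)\sinc(x-j)$ contribute only a small, controllable amount because $\sinc$ is bounded and the relevant partial sums of $1/|x-j|$ against bounded Gaussian-type weights are small; a clean way to handle the far tails is to also condition on $\sum_j S(j)^2/(1+j^2)$ being small, an event of probability bounded below by a positive constant. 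Once such a configuration and neighborhood are fixed, the probability that all the i.i.d.\ samples $S(j)$, $0\le j\le N$, land in the prescribed windows is at least $\prod_{j=0}^{N}p_0 = p_0^{N+1}\ge e^{-c_d N}$ for a constant $p_0=p_0(d)\in(0,1)$, and the tail-control event contributes a further constant factor, yielding the stated bound $\Pro(S(x)>d,\ 0\le x<N)\ge e^{-c_d N}$.

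The main obstacle is exactly the uniform control of the interpolation series on the continuum interval $[0,N]$ from only a countable family of sample constraints: $\sinc$ decays only like $1/|u|$, so $\sum_{j}|\sinc(x-j)|$ diverges, and one cannot simply bound the tail of $\sum_j S(j)\sinc(x-j)$ term by term. The resolution I expect to use is summation by parts combined with the fact that $\sum_{j=M}^{M'}\sinc(x-j)$ has partial sums that stay bounded uniformly in $x$ and in the range (a Dirichlet-kernel-type cancellation), so that the tail contribution is controlled by the \emph{oscillation/decay} of the coefficient sequence rather than its absolute sum; choosing the conditioning event to make the sample sequence smooth outside $[0,N]$ is what makes this work. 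Alternatively — and this is presumably the route in \cite{Barc} — one proves a deterministic lemma: if $h$ is band-limited to $[-\pi,\pi]$, $h(j)\ge 2d$ for all integers $j$ in a window around $[0,N]$, and $\|h\|$ (in an appropriate weighted $\ell^2$ or uniform norm on samples) is not too large, then $h>d$ on $[0,N]$; the probabilistic part is then just the i.i.d.\ window estimate above. I would state and prove this deterministic band-limited interpolation lemma first, then feed in the i.i.d.\ Gaussian estimate, and finally invoke it through \eqref{eq: main low} to complete the proof of Theorem~\ref{thm: lower bd gen}.
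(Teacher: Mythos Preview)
The paper does not prove this theorem: Theorem~\ref{thm: barc} is quoted from \cite{Barc} and used as a black box in the proof of Theorem~\ref{thm: lower bd gen}. So there is no proof in the paper to compare your proposal against.

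That said, your sketch is broadly in the spirit of the original argument in \cite{Barc}: exploit the Shannon--Whittaker representation $S(x)=\sum_j S(j)\,\sinc(x-j)$ with i.i.d.\ Gaussian samples $(S(j))_{j\in\Z}$, condition on a product event for the samples that has probability $\ge p_0^{\,cN}$, and show deterministically that any band-limited function with those sample constraints stays above $d$ on $[0,N]$. You correctly identify the real difficulty --- the $1/|u|$ decay of $\sinc$ makes $\sum_j|\sinc(x-j)|$ divergent, so the tails cannot be bounded termwise --- and your proposed cures (Abel summation using the bounded partial sums $\sum_{j=M}^{M'}\sinc(x-j)$, together with a weighted-$\ell^2$ tail event for the samples) are exactly the kind of cancellation arguments the original proof relies on. As written it is only an outline, and the deterministic interpolation lemma you allude to needs a careful quantitative statement (how large the ``buffer'' of integers around $[0,N]$ must be, and which norm on the samples controls the overshoot between integers), but the strategy is sound and matches \cite{Barc}.
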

%In appendix~\ref{app: barc} we give a brief proof to this
%result.

We turn to bound the second probability in ~\eqref{eq: main low},
i.e., the probability of the event $\{|g(x)|\le \ep, \:\: 0\le x< N
\}$. This is known in literature as a "small ball probability", and
is bounded from below by the following result:

\begin{lem}[Talagrand~\cite{Tal}, Shao and Wang ~\cite{ShaoWang}]\label{lem: ShW}
Let $(f(t))_{t\in I}$ be a centered Gaussian process on a finite
interval $I$. Suppose that for some $c>0$ and $0<\delta\le 2$,
\begin{equation*}%\label{eq: cond ShW}
d_f(s,t)^2:= \E |f(s)-f(t)|^2 \le c|t-s|^{\delta}, \:\:s,t\in I.
\end{equation*}
Then, for some $K>0$ and every $\ep>0$,
$$\Pro\left(\sup_{t\in I}{|f(t)|\le \ep} \right)
\ge \exp\left(-\frac{K |I|}{\ep^{2/\delta}}\right).$$
\end{lem}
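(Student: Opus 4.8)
The bound is the one established in \cite{Tal,ShaoWang}; here is the shape of the argument. First I would normalize: replacing $f$ by $f/\sqrt c$ turns the hypothesis into $d_f(s,t):=\sqrt{\E|f(s)-f(t)|^2}\le|s-t|^{\delta/2}$, and the target probability is unchanged once $\ep$ is replaced by $\ep/\sqrt c$, so it suffices to find $K>0$ with $\Pro(\sup_{[0,L]}|f|\le\ep)\ge e^{-KL\ep^{-2/\delta}}$ for $I=[0,L]$, $L=|I|$.

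The plan is to discretize at the critical scale. Fix a small $\kappa=\kappa(\delta)>0$, put $\ell:=(\kappa\ep)^{2/\delta}$, and cut $[0,L]$ into $m:=\lceil L/\ell\rceil\le 2L(\kappa\ep)^{-2/\delta}$ consecutive cells $I_1,\dots,I_m$ of length $\le\ell$, each of $d_f$-diameter $\le\ell^{\delta/2}=\kappa\ep$; let $t_j$ be the left endpoint of $I_j$. Then
\[
\Pro\Big(\sup_{[0,L]}|f|\le\ep\Big)\ \ge\ \Pro(A\cap B),\qquad
A:=\bigcap_{j=1}^m\{|f(t_j)|\le\tfrac\ep2\},\quad
B:=\bigcap_{j=1}^m\Big\{\sup_{s\in I_j}|f(s)-f(t_j)|\le\tfrac\ep2\Big\}.
\]
The cell–oscillation factor is the routine part. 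Rescaling $I_j$ to $[0,1]$ turns $\{\sup_{I_j}|f-f(t_j)|\le\ep/2\}$ into $\{\sup_{[0,1]}|h_j|\le1/2\}$ for a centered Gaussian process with $h_j(0)=0$ and $\E|h_j(u)-h_j(v)|^2\le\kappa^2|u-v|^\delta$; Dudley's entropy bound estimates $\E\sup_{[0,1]}|h_j|$ by an absolute multiple of $\kappa$ and $\var h_j\le\kappa^2$, so by the Borell--TIS inequality each of these $m$ events has probability $\ge\tfrac12$ once $\kappa$ is small enough.

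The substantive step, and the one I expect to be the main obstacle, is to show $\Pro(A\cap B)\ge e^{-K'm}$ — that is, to beat, up to a constant in the exponent, the ``product'' lower bound. Here the naive estimate fails: the $m$ slabs $\{|f(t_j)|\le\ep/2\}$ have individual probabilities only of order $\ep$, so multiplying them would cost a spurious factor $\log(1/\ep)$ in the exponent, and the events are moreover far from independent. The resolution is to use the positive dependence of $f$ in an essential way — consecutive increments $f(t_{j+1})-f(t_j)$ have variance $O(\ell^\delta)=O(\ep^2)$ — by organising the discretization hierarchically over dyadic scales between $\ell$ and $L$ and combining the resulting symmetric events through a Gaussian correlation inequality; equivalently, one invokes the metric-entropy / Gaussian-isoperimetry machinery for small deviations of \cite{Tal,ShaoWang}. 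Granting $\Pro(A\cap B)\ge e^{-K'm}$, and noting that passing from $\sup_{[0,L]}|f-f(t_1)|$ to $\sup_{[0,L]}|f|$ adds only the single slab $\{|f(t_1)|\le\ep/2\}$, whose $-\log$ probability is $O(\log(1/\ep))=O(L\ep^{-2/\delta})$ for small $\ep$, and recalling $m\le 2L(\kappa\ep)^{-2/\delta}$, the lemma follows with $K$ a suitable multiple of $\kappa^{-2/\delta}$.
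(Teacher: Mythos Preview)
The paper does not give its own proof of this lemma: it is quoted as a result of Talagrand and Shao--Wang, with a pointer to Ledoux's lecture notes for a proof. So there is nothing in the paper to compare your argument against.

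As for your sketch itself, it is a reasonable account of the architecture of the Talagrand-type proof --- discretize at the critical scale $\ell\sim\ep^{2/\delta}$, control the within-cell oscillations by Dudley/Borell--TIS, and then handle the grid-point constraints --- and you are right that the last of these is where the real content lies. But note that at that crucial step you do not actually carry out an argument: you ``invoke the metric-entropy / Gaussian-isoperimetry machinery for small deviations of \cite{Tal,ShaoWang}'', which is exactly what the lemma is citing in the first place. So your write-up is a helpful heuristic for \emph{why} the bound should hold, not an independent proof; if you want a self-contained argument you would need to supply the hierarchical (dyadic) construction and the correlation/isoperimetry step explicitly, e.g.\ following Ledoux's notes.
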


The proof of Lemma~\ref{lem: ShW}, apart from being deduced from a
much more general result in Talagrand's paper, may be found in notes
by Ledoux~\cite[Ch. 7]{Led} (but in a slightly different version).
Shao and Wang decided to omit a proof from their paper as they
learned that Talagrand's result generalizes theirs; but they do
include the most close formulation to the one above.

We draw the following corollary:
\begin{cor}\label{cor: moment small ball}
Let $f$ be a Gaussian stationary function on $\R$ with spectral
measure $\rho$, obeying the moment condition~\eqref{eq: cond
moment}. Then for all $\ep>0$ there exists $C, K>0$ such that for
any interval $I$ and any $N\in\N$:
$$\Pro\left(\sup_{I} |f| <\ep \right)\ge C e^{-K|I|}.$$
\end{cor}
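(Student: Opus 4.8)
The plan is to derive Corollary~\ref{cor: moment small ball} from Lemma~\ref{lem: ShW} by verifying its hypothesis for our process $f$. The key point is that the moment condition~\eqref{eq: cond moment} supplies exactly the H\"older-type control on the increments of $f$ that Lemma~\ref{lem: ShW} requires. Concretely, I would first observe that stationarity gives
\[
d_f(s,t)^2 = \E|f(s)-f(t)|^2 = 2\bigl(r(0)-r(t-s)\bigr) = 2\int_{T^*}\bigl(1-\cos(\lm(t-s))\bigr)\,d\rho(\lm).
\]
Then, using the elementary bound $1-\cos(x)\le \min(2,x^2)\le 2^{1-\delta}|x|^{\delta}$ valid for $0<\delta\le 2$ (since $\min(2,x^2)=2(\min(1,x^2/2))\le 2(\,|x|/\sqrt2\,)^{\delta}$), I get
\[
d_f(s,t)^2 \le 2^{2-\delta}|t-s|^{\delta}\int_{T^*}|\lm|^{\delta}\,d\rho(\lm) =: c\,|t-s|^{\delta},
\]
and $c<\infty$ precisely because of~\eqref{eq: cond moment}. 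This is the only real content; the rest is bookkeeping.

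Having established $d_f(s,t)^2 \le c|t-s|^{\delta}$ with the \emph{same} constant $c$ for every pair $s,t$ (note this bound is global, not restricted to a bounded interval), I would apply Lemma~\ref{lem: ShW} on the interval $I$, which immediately yields
\[
\Pro\Bigl(\sup_{t\in I}|f(t)|\le\ep\Bigr) \ge \exp\Bigl(-\frac{K|I|}{\ep^{2/\delta}}\Bigr)
\]
for some $K=K(c,\delta)>0$. Since $\ep>0$ is fixed in the statement of the corollary, $K/\ep^{2/\delta}$ is itself a constant depending only on $\ep$ and the process; absorbing it into a new constant (still called $K$) gives the clean form $\exp(-K|I|)$, and one may take $C=1$ (or any $C\le 1$) to match the stated form $Ce^{-K|I|}$. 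By stationarity the bound does not depend on the location of $I$, only on its length $|I|$, so it holds uniformly over all intervals as claimed.

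There is essentially no serious obstacle here; the main thing to be careful about is the case $\delta>2$ in~\eqref{eq: cond moment}. The moment condition only guarantees some $\delta>0$ works, but Lemma~\ref{lem: ShW} needs an exponent in $(0,2]$. This is harmless: if $\int|\lm|^{\delta}d\rho<\infty$ for some $\delta>2$, then since $\rho$ is a finite measure we also have $\int|\lm|^{\delta'}d\rho<\infty$ for every $\delta'\in(0,\delta]$ (split the integral at $|\lm|=1$), so we may simply replace $\delta$ by $\min(\delta,2)$ at the outset. A second minor point: for $T=\Z$ the hypothesis~\eqref{eq: cond moment} is automatic and $\rho$ is supported on $[-\pi,\pi]$, so $\int|\lm|^2 d\rho<\infty$ trivially, but this corollary is stated for GSFs on $\R$ anyway, so only the first remark is needed. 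The parameter $N$ appearing in the statement plays no role and can be ignored (it is presumably there only to match the phrasing of the places where the corollary is invoked). Finally, although the lemma is stated with $\sup$ over a possibly infinite process, $|I|<\infty$ and the continuity of $f$ make $\sup_I|f|$ a well-defined random variable, so the event is measurable and the estimate is meaningful.
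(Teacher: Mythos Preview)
Your approach is essentially identical to the paper's: both reduce to $0<\delta\le 2$, verify the H\"older bound $d_f(s,t)^2 \le c|t-s|^\delta$ via the elementary estimate $1-\cos x \lesssim |x|^\delta$ together with the moment condition~\eqref{eq: cond moment}, and then invoke Lemma~\ref{lem: ShW}. (Minor slip: your explicit constant should be $2^{1-\delta/2}$, not $2^{1-\delta}$, since $2(|x|/\sqrt 2)^\delta = 2^{1-\delta/2}|x|^\delta$; this is irrelevant to the argument, which only needs finiteness of the constant.)
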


Applying the corollary to $f=g$, $I=[0,N)$ and $\ep=\frac d 2>0$,
will give the desired bound on the second factor in ~\eqref{eq: main
low}, thus ending the proof of Theorem~\ref{thm: lower bd gen}.

\begin{proof}[Proof of Corollary ~\ref{cor: moment small ball}]
First we notice that if the moment condition~\eqref{eq: cond moment}
is satisfied with a certain exponent ~$\delta>0$, then it is also
satisfied by any smaller positive exponent. Therefore we may assume
$0<\delta<2$.

We shall check that $f$ obeys the condition of Lemma ~\ref{lem: ShW}
with this same ~$\delta$, i.e. that there exists a constant $c>0$
such that
\begin{equation*}%\label{eq: cond dF^2}
d_{f}(s,t)^2\le c|t-s|^{\delta}, \:\: s,t\in I.
\end{equation*}
Indeed:
\begin{align*}
  d_f(s,t)^2&= \E (f(s)-f(t))^2
  =2(r(0)-r(s-t))\\
  &=2\int_\R\Big(1-\cos(\lm( s-t)) \Big) d\rho(\lm)\le 2L|t-s|^\delta \int_\R |\lm|^\delta d\rho(\lm),
\end{align*}
where $L = \sup_{x\in\R} \frac {1-\cos(x)}{|x|^\delta}<\infty$. The
Corollary follows.
\end{proof}

\end{document}